\documentclass[letterpaper, 10 pt, conference]{ieeeconf}  % Comment this line out if you need a4paper
\usepackage[noadjust]{cite}

\usepackage[utf8]{inputenc}
\usepackage[T1]{fontenc}
\usepackage{lmodern}
\usepackage[english]{babel}
\usepackage{amsmath, amssymb}
\usepackage{graphicx}
\let\labelindent\relax
\usepackage{enumitem}
\usepackage{bm}
\usepackage{dsfont}
\usepackage{wrapfig}
\usepackage[dvipsnames]{xcolor}
\usepackage[hidelinks]{hyperref}
\usepackage{booktabs}
\usepackage{multirow}

\definecolor{myblue}{HTML}{006AA3}
\definecolor{mygreen}{HTML}{5B9A64}
\definecolor{myred}{HTML}{C1121F}
\definecolor{myorange}{HTML}{FAA00F}
\definecolor{mypurple}{HTML}{6D4C94}

\newcommand{\R}{\mathbb{R}}

\newtheorem{theorem}{Theorem}

\newtheorem{proposition}{Proposition}

\newtheorem{lemma}{Lemma}

\newtheorem{example}{Example}

\let\oldfrac\frac % Save the original frac command just in case
\renewcommand{\frac}[2]{%
  \mathchoice
    {\oldfrac{#1}{#2}}%
    {{#1}/{#2}}%
    {{#1}/{#2}}%
    {{#1}/{#2}}%
}
\usepackage{tikz}
\usepackage{pgfplots}
\pgfplotsset{compat=1.18}
\usepgflibrary{arrows.meta}

\usetikzlibrary{
    arrows.meta, positioning, backgrounds, shapes, angles,
    fit, hobby, decorations.markings, 3d, calc, quotes, shadows
}
\pgfdeclarelayer{background}
\pgfsetlayers{background,main}
\usepgfplotslibrary{fillbetween}

\tikzset{
    node style/.style={circle, draw, fill=white, inner sep=1pt, minimum size=1.5em},
    main line/.style={thick}
}
\usepackage{subcaption}
\usepackage{nicematrix}

\IEEEoverridecommandlockouts                              % This command is only needed if 
\title{\LARGE \bf
Worst-Case LTI Perturbations of Finite-Horizon LQR: A Performance Estimation Problem Approach
}

\title{\LARGE \bf
Worst-Case Finite-Horizon Analysis of Perturbed LQR via Performance Estimation Problem
}

\author{Anthony Couthures, Gianluca Bianchin, and Julien M. Hendrickx% <-this % stops a space
\thanks{A. Couthures, G. Bianchin and J. M. Hendrickx are with the ICTEAM institute, UCLouvain, Louvain-la-Neuve, Belgium. A. Couthures, G. Bianchin and J. M. Hendrickx are supported by the “SIDDARTA” Concerted Research Action (ARC) granted by the UCLouvain. J. M. Hendrickx is also supported by “InterpoControl” F.R.S.-FNRS research project.
        {\tt\small anthony.couthures@uclouvain.be}}%
}

\begin{document}
\bstctlcite{IEEEexample:BSTcontrol}

\maketitle
\thispagestyle{empty}
\pagestyle{empty}

%%%%%%%%%%%%%%%%%%%%%%%%%%%%%%%%%%%%%%%%%%%%%%%%%%%%%%%%%%%%%%%%%%%%%%%%%%%%%%%%

\begin{abstract}
    We propose an exact formulation to quantify the finite-horizon robustness of a Linear Quadratic Regulator (LQR) subject to norm-bounded Linear Time-Invariant (LTI) parametric uncertainties, using the Performance Estimation Problem (PEP) framework. Originally developed for optimization, this method allows us to reformulate the worst-case performance search as a rank-constrained Semidefinite Program (SDP). By applying operator interpolation, this formulation retains the sequence variables, allowing the  worst-case to be explicitly reconstructed when a rank-one solution is found. This approach also opens new possibilities for the finite-horizon certification of broader system classes, such as data-driven models or nonlinear components. Using this framework, we observe that finite-horizon LTI perturbations do not simply follow asymptotic strategies. Instead, they align slowly decaying invariant manifolds with specific initial conditions to drive transient growth, causing a bifurcation in the worst-case behavior as the horizon length changes. Finally, we show numerically that our performance bounds sit between the upper bounds of Integral Quadratic Constraint (IQC) relaxations and a lower bound derived from a pseudospectral heuristic.
\end{abstract}

%%%%%%%%%%%%%%%%%%%%%%%%%%%%%%%%%%%%%%%%%%%%%%%%%%%%%%%%%%%%%%%%%%%%%%%%%%%%%%%%
\section{Introduction}
\label{sec:intro}

In the current control literature, there is a gap between the theoretical design of optimal controllers, such as a Linear Quadratic Regulator (LQR), and their real-world applications. These control laws are typically designed assuming a perfectly known, nominal model over an infinite horizon, whereas practical applications often involve deploying them for finite durations on systems subject to parametric uncertainties. Such parameter variations, which create a mismatch between the design model and the true physical system, can arise, for example, from changing operating conditions or unmodeled nonlinearities like a varying resistance in an electronic circuit.

To handle these parametric uncertainties, it is natural to turn to robust control to provide guarantees on the desired behaviors \cite{amatoFinitetimeControlLinear2001,mayneRobustModelPredictive2005,abelPrescribedTimeSafetyDesign2024}. Robust control approaches include, for example, $\mu$-analysis \cite{packardComplexStructuredSingular1993} (which evaluates worst-case stability and performance against structured uncertainties), robust Lyapunov functions and Integral Quadratic Constraints (IQC) \cite{megretskiSystemAnalysisIntegral1997} (a framework that bounds the input-output behavior of complex uncertainties and nonlinearities using quadratic inequalities). These methods provide theoretical certificates under the presence of such perturbations.

However, while powerful, most robust control certificates evaluate only asymptotic behavior or specific perturbations, limiting their use in finite-horizon applications. For instance, $\mu$-analysis computes exact robust stability margins for Linear Time-Invariant (LTI) systems \cite{packardComplexStructuredSingular1993}, but it evaluates asymptotic stability (see Example~\ref{ex:no_pole_motion}). Similarly, while recent structured pseudospectra algorithms provide robust transient bounds \cite{guglielmiRank1MatrixDifferential2023,guglielmiTransientDynamicsStructured2025}, they rely on continuous-time resolvent envelopes rather than finite-horizon trajectories. Time-domain IQC methods handle finite horizons using dissipation inequalities \cite{seilerFiniteHorizonRobustness2019}, but rely on dual bounding techniques, which create two limitations. First, standard time-domain IQC formulations relax the static LTI parameter variations into a larger class of Linear Time-Varying (LTV) uncertainties, introducing conservatism. Second, the dual approach acts primarily as a bounding oracle. Because the optimization variables are dual multipliers rather than system trajectories, extracting the exact physical worst-case perturbation matrices or initial conditions remains difficult.

By contrast, in this paper we evaluate the finite-horizon performance of an LQR controller under norm-bounded LTI uncertainties using the Performance Estimation Problem (PEP) framework \cite{droriPerformanceFirstorderMethods2014, taylorSmoothStronglyConvex2017} and exhibit worst-case behaviors. Originally developed to analyze optimization algorithms, PEP treats worst-case analysis as an optimization problem over discrete trajectories. Using necessary and sufficient operator interpolation conditions, we formulate the search for worst-case LTI trajectories as Quadratically Constrained Quadratic Program (QCQP) and then as a rank-constrained Semidefinite Program (SDP). This primal approach avoids LTV conservatism. By retaining the sequence variables, we can extract the exact worst-case initial conditions and perturbation matrices. Instead of just providing scalar bounds, PEP acts as a diagnostic tool to uncover geometric failure modes when these controllers are deployed over a finite horizon. Finally, we compare our rank-relaxed SDP and numerically extracted lower bounds against standard IQC results to quantify the conservatism gap introduced by the LTV relaxation.

\textbf{Notation.} We denote by $\R$ the set of real numbers. Vectors of dimension $n$ will be denoted as $x \in \R^n$ in lowercase and a matrix $A \in \R^{n \times m}$ in uppercase letters. The concatenation of a sequence of vectors $\{x_k\}_{k=0}^{N-1}$ is denoted by $\bm{x} = [x_0, \dots, x_{N-1}] \in \R^{n \times N}$. For two matrices $A,B \in \R^{n \times n}$, we denote by $A \succeq B$ the Loewner partial order, i.e., $A-B$ is positive semidefinite.

\section{Problem Formulation}
\label{sec:problem_formulation}

Consider a discrete-time linear system $x_{k+1} = Ax_k + B u_k $ with state trajectory $\{x_k\}_{k=0}^{N} \subset \R^n$, control sequence $\{u_k\}_{k=0}^{N-1} \subset \R^m$, for the nominal matrices $A \in \R^{n \times n}$ and $B \in \R^{n \times m}$. Let $u_k = K x_k$ be a nominal state-feedback control law, where $K = -(R + B^\top P B)^{-1} B^\top P A$ is the standard infinite-horizon LQR gain derived from state and input penalty matrices $Q \succeq 0$ and $R \succ 0$, and $P \succ 0$ is the solution to the discrete algebraic Riccati equation: $P = A^\top P A - A^\top P B (R + B^\top P B)^{-1} B^\top P A + Q$.

As discussed in Section~\ref{sec:intro}, deploying this nominal controller in practice subjects the system to parametric mismatches. We therefore evaluate the performance of the fixed controller $K$ under unknown but bounded LTI  perturbations on the system matrices:
\begin{equation}
    x_{k+1} = (A + \Delta_A)x_k + (B + \Delta_B) u_k
\end{equation}
where $\Delta_A \in \R^{n \times n}$ and $\Delta_B \in \R^{n \times m}$ are static perturbations bounded in spectral norm by $\varepsilon_A, \varepsilon_B \ge 0$:
\begin{align*}
    \Delta_A & \in \mathcal{D}_A := \{ M \in \R^{n \times n} \mid \|M\|_2 \le \varepsilon_A \}, \\
    \Delta_B & \in \mathcal{D}_B := \{ M \in \R^{n \times m} \mid \|M\|_2 \le \varepsilon_B \}.
\end{align*}
Under the nominal control law $u_k = Kx_k$, the closed-loop perturbed dynamics becomes:
\begin{equation}\label{eq:closed_loop_dynamics}
    x_{k+1} = (\Phi + \Delta_A + \Delta_B K) x_k,
\end{equation}
where $\Phi = A + BK$ is the nominal closed-loop state matrix.

To quantify the transient performance degradation caused by these uncertainties, we consider a finite-horizon evaluation cost over $N$ steps, with a terminal penalty $S \succeq 0$:
\begin{equation*}
    J_N(x_0, \Delta) = \sum_{k=0}^{N-1} \left( x_k^\top Q x_k + u_k^\top R u_k \right) + x_N^\top S x_N,
\end{equation*}
where the state trajectory depends on the perturbations $\Delta = (\Delta_A, \Delta_B)$ through the closed-loop dynamics defined in \eqref{eq:closed_loop_dynamics}. Substituting $u_k = K x_k$, this cost simplifies to a quadratic function of the state trajectory:
\begin{equation}
    J_N(x_0, \Delta) = \sum_{k=0}^{N-1} x_k^\top Q_K x_k + x_N^\top S x_N,
    \label{eq:finite_cost}
\end{equation}
where $Q_K = Q + K^\top R K \succeq 0$ is the equivalent closed-loop stage cost matrix, and the state sequence $\{x_k\}_{k=0}^N$ is generated by the perturbed dynamics \eqref{eq:closed_loop_dynamics}. As an infinite-horizon gain, $K$ remains suboptimal for this finite-horizon cost even if $\Delta=0$.

Given an initial state bounded by the unit ball, the worst-case performance is:
\begin{align}
                & J_N^*(\varepsilon_A, \varepsilon_B, Q_K, S) := \max_{x_0, \Delta_A, \Delta_B} \quad J_N(x_0, \Delta) \nonumber \\
    \text{s.t.} & \quad  \|x_0\|_2^2 \le 1     \label{eq:exact_worst_case}                                                       \\
                & \quad x_{k+1} = (\Phi + \Delta_A + \Delta_B K) x_k, \quad \forall k \in \mathcal{K}  \nonumber                 \\
                & \quad \Delta_A \in \mathcal{D}_A, \quad \Delta_B \in \mathcal{D}_B \nonumber
\end{align}
where $\mathcal{K} =\{0, \dots, N-1\}$. We restrict $x_0$ to the unit ball without loss of generality. Because the dynamics is linear and the cost is quadratic, the worst-case cost under an arbitrary bound $\|x_0\|_2^2 \le \gamma$ equals $\gamma J_N^*$.

        \begin{example}[Worst-case with nominal pole]\label{ex:no_pole_motion}
            Consider
            \begin{equation*}
                A=\begin{bmatrix}0 & \oldfrac{5}{4} \\
                               0 & \oldfrac{1}{4}
                \end{bmatrix},\quad
                B=\begin{bmatrix}
                    \oldfrac{1}{2} \\
                    \oldfrac{1}{2}
                \end{bmatrix},\quad
                Q=\mathrm{diag}(1,\oldfrac{3}{4}),\quad R=1.
            \end{equation*}
            The Riccati solution is $P=\mathrm{diag}(1,2)$, so that $K=[\begin{smallmatrix}
                0&-1/2
            \end{smallmatrix}]$, $\Phi=\left[\begin{smallmatrix}0&1\\0&0\end{smallmatrix}\right]$ and $Q_K=\mathrm{Id}$. Take $N=2$, $S=0$, $\varepsilon_A=\frac{3}{8}$ and $\varepsilon_B=\frac{1}{4}$. 
            
            Writing $\Phi_\Delta = \Phi + \Delta_A + \Delta_B K $, the cost reduces to $J_2(x_0,\Delta) = \|x_0\|_2^2 + \|\Phi_\Delta x_0\|_2^2$, hence $J_2^*=1+\max_{\Delta}\|\Phi_\Delta\|_2^2$. The triangle inequality gives $\|\Phi_\Delta\|_2 \le \|\Phi\|_2 + \varepsilon_A + \varepsilon_B \|K\|_2 = 3/2$, and this bound is attained by
            \begin{equation*}
                \Delta_A^*=\begin{bmatrix}0&\oldfrac{3}{8}\\0&0\end{bmatrix},\quad
                \Delta_B^*=\begin{bmatrix}-\oldfrac{1}{4}\\0\end{bmatrix},\quad
                \Phi_{\Delta^*}=\begin{bmatrix}0&\oldfrac{3}{2}\\0&0\end{bmatrix}.
            \end{equation*}
            From $x_0=[0\;\,1]^\top$, this gives $J_2^*=1+9/4=3.25$, against $2$ for the nominal loop.
            Yet $\Phi$ and $\Phi_{\Delta^*}$ have the same eigenvalues $\{0,0\}$: the worst-case perturbation leaves the poles in place and strengthens the coupling from the second state to the first. An analysis based on eigenvalue location, such as $\mu$-analysis or pseudospectra, sees no degradation here. In our computations, the perturbations that maximize the spectral radius ($\rho\approx0.79$) yield at most $J_2\approx3.05$. The IQC bound is tight in this case, since with $N=2$ only one transition is penalized and a time-varying perturbation has no extra freedom.
        \end{example}

IQC is exact in this short-horizon example, but not in general. Over longer horizons, a time-varying perturbation can steer the state in a different direction at each step, which a single matrix of the same norm cannot always do. The LTV relaxation then overestimates the LTI worst-case, as Section~\ref{sec:iqc_comparison} shows numerically.

\section{The Performance Estimation Problem (PEP) Reformulation}
\label{sec:pep_framework}
Our main result, Theorem~\ref{thm:gram_formulation}, uses the PEP framework to recast the exact worst-case optimization problem \eqref{eq:exact_worst_case} into a rank-constrained SDP. To derive this formulation, we abstract the problem over discrete sequences, enforce the LTI behavior of the perturbations through operator interpolation, and lift the sequence variables into a positive semidefinite Gram matrix.

\subsection{The Abstract Optimization Problem}

To bypass the coupling between the perturbation matrices $\Delta_A, \Delta_B$ and the state $x_k$ in \eqref{eq:closed_loop_dynamics}, we treat the perturbation outputs as independent sequence variables. Over the horizon $N$, we define:
\begin{align*}
    \bm{x}   & = [x_0, \dots, x_{N-1}] \in \R^{n \times N},       \\
    \bm{v}_A & = [v_{A,0}, \dots, v_{A,N-1}] \in \R^{n \times N}, \\
    \bm{v}_B & = [v_{B,0}, \dots, v_{B,N-1}] \in \R^{n \times N},
\end{align*}
where at each time step $k \in \mathcal{K}$, the pointwise perturbations are given by:
\begin{equation*}
    v_{A,k} = \Delta_A x_k, \qquad v_{B,k} = \Delta_B K x_k.
\end{equation*}

By substituting these vectors into the system dynamics, the state update becomes a linear equation independent of the explicit matrices $\Delta_A$ and $\Delta_B$:
\begin{equation}\label{eq:additive_dynamics}
    x_{k+1} = \Phi x_k + v_{A,k} + v_{B,k}.
\end{equation}
Instead of optimizing over the matrices $\Delta_A, \Delta_B$, our decision variables are now the sequences of vectors: the states $\{x_k\}_{k=0}^{N}$ and the perturbations $\{v_{A,k}\}_{k=0}^{N-1}$ and $\{v_{B,k}\}_{k=0}^{N-1}$.

The optimization problem \eqref{eq:exact_worst_case} can thus be rewritten as the abstract optimization problem over sequences:
\begin{align}
                      & J_N^*(\varepsilon_A, \varepsilon_B, Q_K, S) = \max_{x_k, v_{A,k}, v_{B,k} } \quad  J_N(x_0, \bm{v}_A,\bm{v}_B) \nonumber         \\
    \text{s.t.} \quad & \|x_0\|_2^2 \le 1 \label{eq:abstract_primal}                                                                          \\
                      & x_{k+1} = \Phi x_k + v_{A,k} + v_{B,k}, \quad \forall k \in \mathcal{K}   \nonumber                                   \\
                      & \exists \Delta_A \in \mathcal{D}_A \text{ s.t. } v_{A,k} = \Delta_A x_k, \quad \forall k \in \mathcal{K}  \nonumber   \\
                      & \exists \Delta_B \in \mathcal{D}_B \text{ s.t. } v_{B,k} = \Delta_B K x_k, \quad \forall k \in \mathcal{K}  \nonumber \\[-1em]\nonumber
\end{align}
By shifting to this sequence-based formulation, the objective and the dynamics are now quadratic and linear, respectively. However, the final two constraints ensuring the perturbation sequences can be generated by single norm-bounded LTI operators cannot be directly encoded in a solver. The PEP framework works by translating these abstract existence constraints into algebraic inequalities.

\subsection{Operator Interpolation: Enforcing LTI Behavior}

Let $\mathcal{L}_{[0, L]}$ denote the set of matrices $M \in \R^{d_2 \times d_1}$ with an induced $2$-norm bounded by $L \ge 0$. We use operator interpolation to enforce LTI behavior on the perturbation variables. Let $\bm{p} = [p_0, \dots, p_{N-1}] \in \R^{d_1 \times N}$ and $\bm{q} = [q_0, \dots, q_{N-1}] \in \R^{d_2 \times N}$ be matrices containing vector sequences. We say the sequences are $\mathcal{L}_{[0, L]}$-interpolable if there exists $M \in \mathcal{L}_{[0, L]}$ such that $q_k = M p_k$ for all $k$. For $\mathcal{L}_{[0, L]}$ the interpolation conditions are given by algebraic conditions over the data matrices from \cite{bousselmiInterpolationConditionsLinear2024}.

\begin{lemma}[$\mathcal{L}_{[0,L]}$-Interpolation Condition]\label{lem:interp_condition}
    Let $\bm{p} \in \R^{d_1 \times N}$ and $\bm{q} \in \R^{d_2 \times N}$ be data matrices. The sequences are $\mathcal{L}_{[0, L]}$-interpolable if and only if the following LMI holds:
    \begin{equation*}
        L^2 \bm{p}^\top \bm{p} - \bm{q}^\top \bm{q} \succeq 0.
    \end{equation*}
\end{lemma}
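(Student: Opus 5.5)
Necessity is immediate. If $q_k = M p_k$ with $\|M\|_2 \le L$, then $\bm{q} = M\bm{p}$. For any $z \in \R^N$,
\[
z^\top \bm{q}^\top \bm{q} z = \|M \bm{p} z\|_2^2 \le L^2 \|\bm{p} z\|_2^2 = z^\top L^2 \bm{p}^\top \bm{p} z,
\]
which is exactly the claimed LMI.

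For sufficiency, I will construct $M$ explicitly from the data. Assume $L^2 \bm{p}^\top \bm{p} \succeq \bm{q}^\top \bm{q}$.

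\textbf{Step 1: $M$ is well defined on the range of $\bm{p}$.} If $\bm{p} z = 0$, the LMI gives $\|\bm{q} z\|_2^2 \le 0$, so $\ker \bm{p} \subseteq \ker \bm{q}$. Hence the map $\bm{p} z \mapsto \bm{q} z$ defines a linear map $M_0 \colon \mathrm{range}(\bm{p}) \to \R^{d_2}$ consistently.

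\textbf{Step 2: the norm bound holds on the range.} The same LMI gives $\|M_0 y\|_2 \le L \|y\|_2$ for every $y = \bm{p} z \in \mathrm{range}(\bm{p})$.

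\textbf{Step 3: extend to all of $\R^{d_1}$.} Define $M = M_0 \Pi$, where $\Pi$ is the orthogonal projector onto $\mathrm{range}(\bm{p})$; equivalently, $M = \bm{q}\bm{p}^\dagger$ with $\bm{p}^\dagger$ the Moore--Penrose pseudoinverse. For any $y \in \R^{d_1}$,
\[
\|M y\|_2 = \|M_0 \Pi y\|_2 \le L \|\Pi y\|_2 \le L \|y\|_2,
\]
so $\|M\|_2 \le L$. Finally, $M p_k = M_0 p_k = q_k$ because $p_k = \bm{p} e_k$ and $q_k = \bm{q} e_k$.

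The only point requiring care is the kernel inclusion in Step 1, which is what makes $M$ well defined. Once that is in place, the contraction bound in Steps 2–3 is routine: projecting orthogonally onto $\mathrm{range}(\bm{p})$ never increases the norm. As a check on the formula $M = \bm{q}\bm{p}^\dagger$: since $\ker\bm{p} \subseteq \ker\bm{q}$, we have $\bm{q}(I - \bm{p}^\dagger\bm{p}) = 0$, which gives $M\bm{p} = \bm{q}$ directly.
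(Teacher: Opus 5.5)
Your proof is correct and complete. Necessity is the one-line norm estimate. For sufficiency, the kernel inclusion $\ker\bm{p}\subseteq\ker\bm{q}$ (the LMI evaluated on $\ker\bm{p}$) makes $\bm{p}z\mapsto\bm{q}z$ well defined. The same LMI bounds this map by $L$ on $\mathrm{range}(\bm{p})$, and composing with the orthogonal projector gives $M=\bm{q}\bm{p}^\dagger$ with $\|M\|_2\le L$; the degenerate case $L=0$ forces $\bm{q}=0$ and is covered.

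The paper gives no argument for this lemma. It simply imports it from \cite{bousselmiInterpolationConditionsLinear2024}, where it comes out of a more general interpolation theory for linear operators developed for performance estimation.

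Compared with the citation, your route has two advantages. It is self-contained and elementary. It is also constructive: it exhibits the minimum-norm interpolant $\bm{q}\bm{p}^\dagger$, whose operator norm is exactly the smallest $L$ for which the LMI holds. This construction is what the paper needs, but never writes down, to support its claim that a rank-one solution yields explicit worst-case perturbation matrices. From the extracted sequences one can take $\Delta_A=\bm{v}_A\bm{x}^\dagger$ and $\Delta_B=\bm{v}_B(K\bm{x})^\dagger$, and your argument certifies that these are admissible.

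The citation places the lemma inside a broader interpolation framework. That framework is unnecessary for this one-sided norm-ball case, but the extensions to other uncertainty classes sketched in the paper's conclusion would rely on it.
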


Lemma~\ref{lem:interp_condition} allows us to map the operator existence condition in \eqref{eq:abstract_primal} to quadratic constraints on the data matrix, which turn the problem into a QCQP.

\begin{proposition}[QCQP formulation]\label{prop:qcqp_formulation}
    The worst-case finite-horizon performance $J^*_N$ defined in \eqref{eq:abstract_primal} equals the optimal value of the following QCQP over the finite vector sequences $\bm{x}, \bm{v}_A$, and $\bm{v}_B$:
    \begin{align}
                          & J^*_N(\varepsilon_A, \varepsilon_B, Q_K, S)= \max_{\bm{x}, \bm{v}_A, \bm{v}_B} \quad   J_N(x_0, \bm{v}_A,\bm{v}_B) \nonumber \\
        \text{s.t.} \quad & \|x_0\|_2^2 \le 1 \label{eq:pep_QCQP}                                                                             \\
                          & x_{k+1} = \Phi x_k + v_{A,k} + v_{B,k}, \quad \forall k \in \mathcal{K} \nonumber                                 \\
                          & \varepsilon_A^2 \bm{x}^\top \bm{x} - \bm{v}_A^\top \bm{v}_A \succeq 0 \nonumber                                   \\
                          & \varepsilon_B^2 \bm{x}^\top K^\top K \bm{x} - \bm{v}_B^\top \bm{v}_B \succeq 0 \nonumber                          \\[-1em]\nonumber
    \end{align}
\end{proposition}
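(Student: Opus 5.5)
The plan is to show that the feasible sets of \eqref{eq:abstract_primal} and \eqref{eq:pep_QCQP} coincide as sets of sequences $(\bm{x},\bm{v}_A,\bm{v}_B)$. The objective $J_N$ depends only on $x_0,\dots,x_N$, and these are fixed by the common dynamics constraint. Equal feasible sets therefore give equal optimal values. The constraint $\|x_0\|_2^2\le 1$ and the linear dynamics appear verbatim in both problems, so the whole argument reduces to the two existence constraints. I will treat each one by a direct application of Lemma~\ref{lem:interp_condition} with a suitable choice of data matrices.

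For the $\Delta_A$ constraint I take $d_1=d_2=n$, $\bm{p}=\bm{x}$, $\bm{q}=\bm{v}_A$ and $L=\varepsilon_A$. The condition ``$\exists \Delta_A\in\mathcal{D}_A$ with $v_{A,k}=\Delta_A x_k$ for all $k\in\mathcal{K}$'' is exactly $\mathcal{L}_{[0,\varepsilon_A]}$-interpolability of $(\bm{x},\bm{v}_A)$. This is because $\mathcal{D}_A$ is precisely the set of $n\times n$ matrices with induced 2-norm at most $\varepsilon_A$. The lemma then makes it equivalent to $\varepsilon_A^2\bm{x}^\top\bm{x}-\bm{v}_A^\top\bm{v}_A\succeq 0$.

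For the $\Delta_B$ constraint the operator acts on $Kx_k$ rather than on $x_k$. I therefore set $\bm{p}=K\bm{x}\in\R^{m\times N}$ (so $d_1=m$, $d_2=n$), $\bm{q}=\bm{v}_B$ and $L=\varepsilon_B$. The condition ``$\exists\Delta_B\in\mathcal{D}_B$ with $v_{B,k}=\Delta_B(Kx_k)$'' is again exactly interpolability, since $\mathcal{D}_B=\mathcal{L}_{[0,\varepsilon_B]}$ in $\R^{n\times m}$. The lemma gives $\varepsilon_B^2(K\bm{x})^\top(K\bm{x})-\bm{v}_B^\top\bm{v}_B = \varepsilon_B^2\bm{x}^\top K^\top K\bm{x}-\bm{v}_B^\top\bm{v}_B\succeq 0$.

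Combining the two equivalences, every feasible point of one problem is feasible for the other, with the same objective value, so the optimal values agree. Neither problem is unbounded, since \eqref{eq:exact_worst_case} is a maximum of a continuous function over a compact set, so ``optimal value'' is well defined and the equality of suprema holds. The main point requiring care is not a calculation but the direction of the lemma. Using the ``if'' direction, a QCQP-feasible point yields actual matrices $\Delta_A,\Delta_B$ reproducing the sequences, and hence a genuine closed-loop trajectory of \eqref{eq:closed_loop_dynamics}. This direction is what makes the formulation exact rather than a relaxation. I rely on Lemma~\ref{lem:interp_condition} as stated, with no further rank or structure hypotheses needed. One should still note that it applies for arbitrary (possibly rank-deficient) $\bm{p}$, such as $K\bm{x}$ when $m<N$.
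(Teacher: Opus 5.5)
Your proposal is correct and follows the paper's proof. The paper likewise applies Lemma~\ref{lem:interp_condition} twice, once with data $(\bm{x},\bm{v}_A)$ and bound $\varepsilon_A$ and once with data $(K\bm{x},\bm{v}_B)$ and bound $\varepsilon_B$, replacing the two existence constraints of \eqref{eq:abstract_primal} by the corresponding LMIs; your remarks on equality of feasible sets, attainment of the maximum, and validity of the lemma for rank-deficient $K\bm{x}$ just make explicit what the paper leaves implicit.
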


\begin{proof}
    Using Lemma~\ref{lem:interp_condition}, we map the existence constraints $\exists \Delta_A \in \mathcal{D}_A$ in the optimization problem \eqref{eq:abstract_primal} to the LMI $\varepsilon_A^2 \bm{x}^\top \bm{x} - \bm{v}_A^\top \bm{v}_A \succeq 0$. Similarly, $\exists \Delta_B \in \mathcal{D}_B$ is replaced by $\varepsilon_B^2 \bm{x}^\top K^\top K \bm{x} - \bm{v}_B^\top \bm{v}_B \succeq 0$, yielding \eqref{eq:pep_QCQP}.
\end{proof}

Proposition~\ref{prop:qcqp_formulation} provides an equivalent formulation of \eqref{eq:abstract_primal} and thus to \eqref{eq:exact_worst_case}. However, the resulting QCQP remains non-convex due to the quadratic matrix inequalities.

\subsection{The Gram Matrix SDP Formulation}\label{subsec:gram_sdp}

To bypass the intractability of the QCQP, we use a Gram lifting approach \cite{luoSemidefiniteRelaxationQuadratic2010}. Because the objective function and constraints in \eqref{eq:pep_QCQP} depend solely on quadratic inner products of the sequence variables, we can linearize them by shifting our decision variables to a positive semidefinite Gram matrix.

We concatenate the initial state and the independent perturbation sequences into a single lifted vector $\bm{z}$:
\begin{equation*}
    \bm{z} = \begin{bmatrix} x_0^\top & v_{A,0}^\top \dots  v_{A,N-1}^\top & v_{B,0}^\top  \dots  v_{B,N-1}^\top \end{bmatrix}^\top.
\end{equation*}

We define block-selector matrices $E_0, E_{A,k}, E_{B,k} \in \R^{n \times n(1+2N)}$ to extract the corresponding blocks from $\bm{z}$:
\begin{equation*}
    x_0 = E_0 \bm{z}, \quad v_{A,k} = E_{A,k} \bm{z}, \quad v_{B,k} = E_{B,k} \bm{z}.
\end{equation*}

Because the linear dynamics determines the state trajectory, every state $x_k$ is a linear combination of the components of $\bm{z}$. Using the dynamics from Proposition~\ref{prop:qcqp_formulation}, we define the state extraction matrices $H_{x,k}$ recursively:
\begin{equation*}
    H_{x,0} = E_0, \quad  H_{x,k+1} = \Phi H_{x,k} + E_{A,k} + E_{B,k}, \quad \forall k \in \mathcal{K}.
\end{equation*}

Thus, $x_k = H_{x,k} \bm{z}$ for all $k\in \mathcal{K}$. Any scalar inner product in the system can be expressed as a linear trace operation on the $(1+2N)n \times (1+2N)n$ positive semidefinite Gram matrix $Z = \bm{z} \bm{z}^\top$, which has rank at most 1. Indeed, for any symmetric $M \in \R^{n\times n}$, one has:
\begin{equation*}
    x_i^\top M x_j = \bm{z}^\top H_{x,i}^\top M H_{x,j} \bm{z} = \operatorname{Tr} \left( H_{x,j}^\top M H_{x,i} Z \right).
\end{equation*}

By the trace property, the $(i,j)$-th entry of this matrix is $\operatorname{Tr}(H_{x,j}^\top H_{x,i} Z)$. Similarly, the cost $J_N$ can be expressed as a linear operator acting on $Z$. Explicitly, the symmetric cost matrix $M_{J_{N}} \in \R^{(1+2N)n \times (1+2N)n}$ is obtained from the state extraction matrices $H_{x,k}$ as:
\begin{equation*}
    M_{J_N} = \sum_{k=0}^{N-1} H_{x,k}^\top Q_K H_{x,k} + H_{x,N}^\top S H_{x,N},
\end{equation*}
which yields $J_N(x_0, \bm{v}_A, \bm{v}_B) = \operatorname{Tr}(M_{J_N} Z)$.

Let $\mathcal{G}_{\bm{x}}(Z) \in \mathbb{S}^{N \times N}$ denote the linear operator mapping $Z$ to the $N \times N$ sequence inner-product matrix $\bm{x}^\top \bm{x}$. We define analogous linear operators $\mathcal{G}_{\bm{v}_A}(Z), \mathcal{G}_{\bm{v}_B}(Z)$, and $\mathcal{G}_{K\bm{x}}(Z)$ for the other sequences. This allows us to express the interpolation conditions as LMIs over $Z$.

\begin{theorem}[Rank-constrained SDP formulation]\label{thm:gram_formulation}
    The worst-case finite-horizon performance $J^*_N$ equals the optimal value of the following rank-constrained SDP over the Gram matrix $Z$:
    \begin{align}
                          & J^*_N(\varepsilon_A, \varepsilon_B, Q_K, S) = \max_{Z \succeq 0} \quad \operatorname{Tr} (M_{J_N} Z  ) \nonumber \\
        \text{s.t.} \quad & \operatorname{Tr}\left( E_0^\top E_0 Z \right) \le 1 \label{eq:exact_pep_primal}                                              \\
                          & \varepsilon_A^2 \mathcal{G}_{\bm{x}}(Z) - \mathcal{G}_{\bm{v}_A}(Z) \succeq 0 \nonumber                                       \\
                          & \varepsilon_B^2 \mathcal{G}_{K\bm{x}}(Z) - \mathcal{G}_{\bm{v}_B}(Z) \succeq 0 \nonumber                                      \\ \nonumber
                          & \operatorname{rank}(Z) \le 1                                                                                                  \\[-1em]\nonumber
    \end{align}
\end{theorem}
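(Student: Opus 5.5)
The plan is to show that \eqref{eq:exact_pep_primal} and the QCQP \eqref{eq:pep_QCQP} of Proposition~\ref{prop:qcqp_formulation} have the same feasible values of the objective. The link is the correspondence $\bm{z} \leftrightarrow Z=\bm{z}\bm{z}^\top$. Since Proposition~\ref{prop:qcqp_formulation} already equates the QCQP value with $J_N^*$, this suffices.

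First I would eliminate the state variables of the QCQP. The dynamics constraint $x_{k+1}=\Phi x_k+v_{A,k}+v_{B,k}$ determines $x_1,\dots,x_N$ uniquely from $(x_0,\bm{v}_A,\bm{v}_B)$. By induction on $k$, using the recursion $H_{x,k+1}=\Phi H_{x,k}+E_{A,k}+E_{B,k}$, this gives $x_k=H_{x,k}\bm{z}$. Hence the QCQP is equivalent to a problem in the free variable $\bm{z}\in\R^{(1+2N)n}$, with the dynamics absorbed.

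Next I would express every quantity as a linear function of $Z=\bm{z}\bm{z}^\top$.
\begin{itemize}[leftmargin=*]
\item The objective is $J_N=\bm{z}^\top M_{J_N}\bm{z}=\operatorname{Tr}(M_{J_N}Z)$.
\item The initial-state constraint is $\|x_0\|_2^2=\operatorname{Tr}(E_0^\top E_0 Z)$.
\item For the matrix constraints, the $(i,j)$ entry of $\bm{x}^\top\bm{x}$ is $x_i^\top x_j=\operatorname{Tr}(H_{x,j}^\top H_{x,i}Z)$, which defines $\mathcal{G}_{\bm{x}}(Z)$. Analogously, $\mathcal{G}_{\bm{v}_A}$ uses $E_{A,i},E_{A,j}$, $\mathcal{G}_{\bm{v}_B}$ uses $E_{B,i},E_{B,j}$, and $\mathcal{G}_{K\bm{x}}$ uses $KH_{x,i},KH_{x,j}$.
\end{itemize}
So when $Z=\bm{z}\bm{z}^\top$, each LMI in \eqref{eq:exact_pep_primal} coincides entrywise with the corresponding LMI in \eqref{eq:pep_QCQP}.

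Then I would prove both inequalities between the optimal values.

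\emph{QCQP $\le$ SDP.} Given a feasible QCQP point, form $\bm{z}$ and set $Z=\bm{z}\bm{z}^\top$. This $Z$ is positive semidefinite, has rank at most one, satisfies all the constraints, and attains the same objective value.

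\emph{SDP $\le$ QCQP.} Take a feasible $Z$. Since $Z\succeq0$ and $\operatorname{rank}Z\le1$, we can write $Z=\bm{z}\bm{z}^\top$ for some $\bm{z}$ (possibly zero). Extract $x_0,\bm{v}_A,\bm{v}_B$ with the selectors and generate the states by the dynamics. The identities above show that this point is QCQP-feasible with the same objective value.

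The only delicate step is the factorization of a positive semidefinite matrix of rank at most one as $\bm{z}\bm{z}^\top$. It follows from the spectral decomposition, taking $\bm{z}=\sqrt{\lambda}\,u$ for the single nonzero eigenpair $(\lambda,u)$, or $\bm{z}=0$ if $Z=0$. The remaining bookkeeping concerns the transpose and index conventions in $\operatorname{Tr}(H_{x,j}^\top H_{x,i}Z)$, which give the symmetric Gram entries. Finally, both problems attain their maxima (the feasible set is compact after bounding $x_0$, since the LMIs bound $\bm{v}$ in terms of $\bm{x}$), but equality of the suprema holds even without attainment.
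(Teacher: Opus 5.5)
Your proposal is correct and follows essentially the same route as the paper. The paper's justification is the Gram-lifting argument preceding the theorem: eliminate the states via $x_k = H_{x,k}\bm{z}$, set $Z=\bm{z}\bm{z}^\top$, and write the cost, the initial-state bound and the interpolation LMIs of Proposition~\ref{prop:qcqp_formulation} as linear functions of $Z$. You make this rigorous by checking both inequalities between the optimal values and by factorizing a positive semidefinite matrix of rank at most one, which the paper leaves implicit.
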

This reformulates the problem into an SDP with a rank constraint. While still NP-hard, isolating the non-convexity allows us to drop the rank constraint to form a tractable convex relaxation. Moreover, the rank then serves as an admissibility check: if the relaxed solution has rank one, the bound is tight and produces an exact, physically realizable worst-case trajectory. However, the convex relaxation suffers from conservatism. Indeed, the optimal relaxed Gram matrix $Z_{\text{SDP}}$ often has a rank strictly greater than one \cite{luoSemidefiniteRelaxationQuadratic2010}. In this case, the relaxation only provides an upper bound on $J^*_N$ that is not necessarily physical, as it may correspond to a superposition of trajectories.

\begin{figure*}[t]
    \centering

    \begingroup
    \setlength{\tabcolsep}{8pt} % Spacing between legend items
    \renewcommand{\arraystretch}{1.2}
    \begin{tabular}{@{} l l l l@{}}
        \rule[0.6ex]{1.5em}{1pt} Unit Circle                                &
        \textcolor{mygreen}{\Large $\bullet$} Nominal eigenvalue            &
        \textcolor{red!70!black}{\Large $\bm{\star}$} Worst-case eigenvalue &
        \textcolor{myorange}{\rule{1.5em}{1.2ex}} $\Lambda_\varepsilon(\Phi)$ Structured $\varepsilon$-pseudospectrum
    \end{tabular}
    \endgroup
    % \vspace{0.1em} % Space between legend and plots

    % --- ROW 1: Spectral Targeting (Frequency Domain) ---
    \begin{subfigure}[b]{0.32\textwidth}
        \centering
        \includegraphics[width=\textwidth]{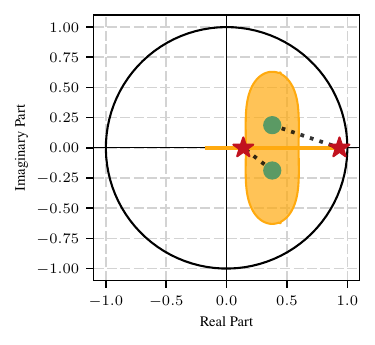}
        \caption{$A_1$, $\varepsilon_A =0.22$, $\varepsilon_B =0$: $\lambda^* > 0$}
        \label{fig:eig_mon}
    \end{subfigure}
    \hfill
    \begin{subfigure}[b]{0.32\textwidth}
        \centering
        \includegraphics[width=\textwidth]{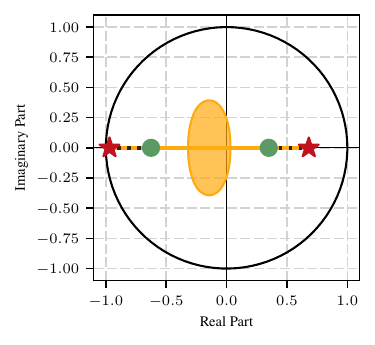}
        \caption{$A_2$, $\varepsilon_A =0.2$, $\varepsilon_B =0.1$: $\lambda^* < 0$}
        \label{fig:eig_osc}
    \end{subfigure}
    \hfill
    \begin{subfigure}[b]{0.32\textwidth}
        \centering
        \includegraphics[width=\textwidth]{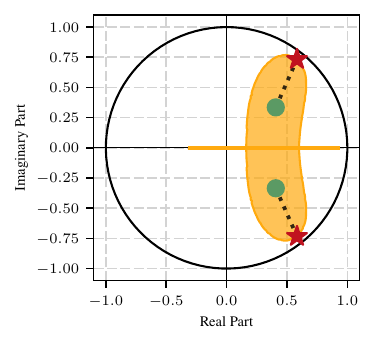}
        \caption{$A_3$, $\varepsilon_A =0.1$, $\varepsilon_B =0.25$: $\lambda^* \in \mathbb{C} \setminus \mathbb{R}$}
        \label{fig:eig_complex}
    \end{subfigure}

    \vspace{0.1em} % Space between legend and plots
    \begingroup
    \setlength{\tabcolsep}{8pt}
    \renewcommand{\arraystretch}{1.2}
    \begin{tabular}{@{} c l c l @{}}

        % Item 1: Nominal Traj (Solid Green Line + Circle)
        \makebox[2.5em][c]{\textcolor{mygreen}{\rule[0.6ex]{2.5em}{1.2pt}}}% Draws solid line
        \hspace{-2.5em}% Backtracks to the start of the line
        \makebox[2.5em][c]{\large\textcolor{mygreen}{$\bullet$}} % Overlays circle in the center
         & Nominal Trajectory    &

        % Item 2: Worst-Case Traj (Dashed Red Line + Square)
        \makebox[2.5em][c]{\textcolor{red!75!black}{%
                \rule[0.6ex]{0.5em}{1.2pt}\hspace{0.2em}%
                \rule[0.6ex]{0.5em}{1.2pt}\hspace{0.2em}%
                \rule[0.6ex]{0.5em}{1.2pt}%
            }}% Draws dashed line (3 segments)
        \hspace{-2.5em}% Backtracks to the start of the line
        \makebox[2.5em][c]{\textcolor{red!75!black}{\footnotesize $\blacksquare$}} % Overlays square
         & Worst-Case Trajectory
    \end{tabular}
    \endgroup

    % --- ROW 2: Phase Portraits (State-Space / Time Domain) ---
    \begin{subfigure}[b]{0.32\textwidth}
        \centering
        \includegraphics[width=\linewidth]{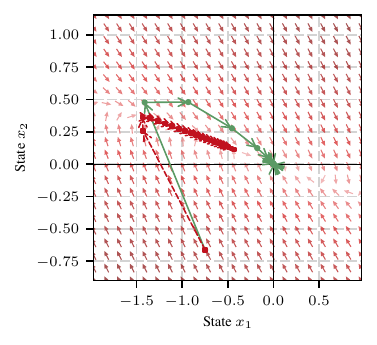}
        \caption{Monotonic 1D Slow Manifold}
        \label{fig:phase_mon}
    \end{subfigure}
    \hfill
    \begin{subfigure}[b]{0.32\textwidth}
        \centering
        \includegraphics[width=\linewidth]{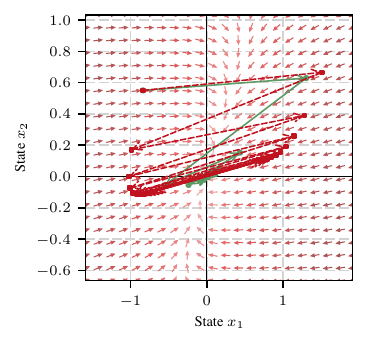}
        \caption{Oscillatory 1D Slow Manifold}
        \label{fig:phase_osc}
    \end{subfigure}
    \hfill
    \begin{subfigure}[b]{0.32\textwidth}
        \centering
        \includegraphics[width=\linewidth]{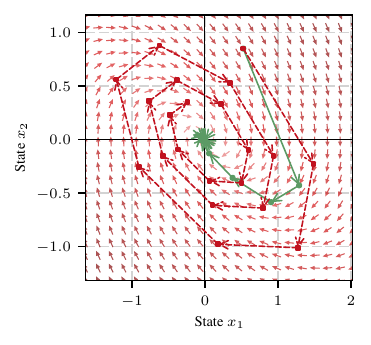}
        \caption{Spiraling 2D Invariant Subspace}
        \label{fig:phase_complex}
    \end{subfigure}

    \caption{Worst-case perturbation: spectral representation (top row) and trajectories (bottom row) extracted via PEP and projected gradient ascent.}
    \label{fig:pep_all_behaviors}
\end{figure*}

\section{Extracting Worst-Case via PEP}
\label{sec:numerical_discovery}

As noted above, the solution to the relaxed problem \eqref{eq:exact_pep_primal}, denoted $Z_{\text{SDP}}$, is often of rank strictly greater than one, already for relatively short planning horizons ($N \geq 5$). To find a valid LTI trajectory, we keep the SDP result as an upper bound and search for an admissible worst case directly over the perturbations. For fixed $(\Delta_A,\Delta_B)$, the worst initial condition is the dominant eigenvector of the finite-horizon Gramian, so the search reduces to maximizing $J_N$ over the perturbations alone. We do so by projected gradient ascent on the norm balls $\mathcal{D}_A \times \mathcal{D}_B$, following the norm-constrained gradient flows of \cite{guglielmiRank1MatrixDifferential2023,guglielmiTransientDynamicsStructured2025}. The ascent is started from the principal eigenvector of $Z_{\text{SDP}}$, from the best Monte Carlo (MC) sample and from random perturbations. The result is an admissible LTI perturbation, so its cost is a lower bound on $J_N^*$, and it satisfies first-order stationarity conditions. When $\operatorname{rank}(Z_{\text{SDP}})=1$, it matches the SDP bound.

We provide an interactive interface \cite{couthuresWorstcaseLTIPerturbations2026} for changing the parameters (matrix $A,B,Q,R,S$ and hyperparameter $N, \varepsilon_A,\varepsilon_B$) in real time using the solver MOSEK.

\subsection{Pseudospectral Targeting and Trajectories for Large $N$}\label{disc:pseudospectra}

To illustrate the PEP approach, we apply it to three 2D systems (Fig.~\ref{fig:pep_all_behaviors}). We chose the following nominal matrices $A$ because they each represent one of the three distinct worst-case behaviors we observed, making the differences visually clear:
\begin{equation*}
    A_1 = \begin{bmatrix} 1 & 1 \\ 0 & 1 \end{bmatrix},\, A_2 = \begin{bmatrix} -0.9 & 1 \\ 0 & 1 \end{bmatrix}\, \text{and}\, A_3 = \begin{bmatrix} 1 & 0.9 \\ -0.8 & 1 \end{bmatrix},
\end{equation*}
with $ B = \begin{bmatrix} 0 & 1 \end{bmatrix}^\top$ and $Q = R = S = \mathrm{Id}$ for $N=20$.

Varying the nominal system matrices reveals three classes of worst-case behavior, which we interpret through their spectral geometry (Fig.~\ref{fig:pep_all_behaviors}, top row). This spectral view is used as a diagnostic of the PEP solution: in general, the finite-horizon objective is not determined by the perturbed eigenvalues alone (Example~\ref{ex:no_pole_motion}). For given uncertainty bounds $\varepsilon_A$ and $\varepsilon_B$, the structured $\varepsilon$-pseudospectrum of the closed-loop system \cite{guglielmiTransientDynamicsStructured2025} is the set of all possible eigenvalues for the perturbed matrix $\Phi_\Delta = \Phi + \Delta_A + \Delta_B K$ under valid LTI perturbations, computed exactly via the real stability radius formula \cite{qiuFormulaComputationReal1995}:
\begin{equation*}
    \begin{split} \Lambda_{\varepsilon}(\Phi) :=\{ \lambda \in \mathbb{C} \mid & \det(\lambda I - (\Phi + \Delta_A + \Delta_B K)) = 0,                                      \\
                                                                               & \text{for some } \|\Delta_A\|_2 \le \varepsilon_A, \, \|\Delta_B\|_2 \le \varepsilon_B \}.
    \end{split}
\end{equation*}
\begin{figure}[t]
    \centering
    \includegraphics[width=\linewidth]{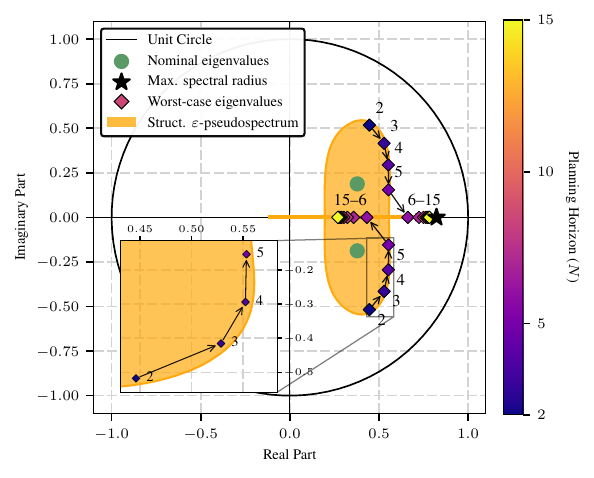}
    \caption{Spectral migration of the worst-case closed-loop poles as a function of the finite planning horizon $N$. The colored diamonds represent the worst-case poles extracted from the PEP solution.}
    \label{fig:pole_migration}
\end{figure}

As the red stars show, the optimal perturbation often pushes a dominant eigenvalue toward the boundary of $\Lambda_{\varepsilon}(\Phi)$. For large horizons, this behavior approaches the spectral-radius-maximizing perturbations predicted by classical $\mu$-analysis and pseudospectral theory \cite{hinrichsenRealComplexStability1990,packardComplexStructuredSingular1993}. However, these asymptotic tools do not fully characterize the finite-horizon optimum: PEP captures transient dynamics and invariant-direction effects, so the worst-case cost is not determined by the eigenvalues alone. In our numerical evaluations, the pseudospectral heuristic that maximizes the spectral radius of $\Phi_\Delta$ over the admissible perturbations \cite{guglielmiRank1MatrixDifferential2023} therefore yields a lower cost than PEP (e.g., on system $A_1$, $J^\rho_{20} \approx 20.95$ versus $J^{\mathrm{PEP}}_{20} \approx 21.06$). Since this maximizer is not unique, $J^\rho_N$ denotes the largest finite-horizon cost among the sampled maximizers, the most favorable choice for the heuristic.

This occurs because the finite-horizon cost depends on transient amplification, whereas asymptotic tools mainly focus on pole locations. PEP does not impose or target any particular spectral configuration: it optimizes directly over admissible finite-horizon trajectories. A posteriori, we observe that the resulting perturbations may remain slightly inside $\Lambda_{\varepsilon}(\Phi)$ depending on the cost matrices while modifying the associated invariant directions in a way that produces a larger transient cost.

The maximum spectral radius on the boundary can occur at a real positive, real negative, or complex eigenvalue, depending on the nominal eigenvalues. This shift changes the state-space trajectories (bottom row of Fig.~\ref{fig:pep_all_behaviors}). The perturbation modifies the dominant eigenspace of $\Phi_{\Delta^*}$ to confine the state energy to a slowly decaying invariant manifold. In our computations, we observe three cases:

\begin{itemize}[wide]
    \item \emph{Monotonic 1D Slow Manifold ($\lambda^* > 0$, Fig.~\ref{fig:phase_mon}):}
          When the dominant worst-case eigenvalue is real and positive, the optimal perturbation applies a rank-one shear opposing the nominal damping. The trajectory collapses onto a 1D invariant line (a ``slow manifold'') and moves monotonically toward the origin.

    \item \emph{Oscillatory 1D Slow Manifold ($\lambda^* < 0$, Fig.~\ref{fig:phase_osc}):}
          If the perturbed eigenvalue on the boundary is real and negative, the trajectory again collapses onto a 1D line. The negative eigenvalue causes the state to alternate signs, jumping back and forth across the origin at each step while slowly decaying.

    \item \emph{Spiraling 2D Invariant Subspace ($\lambda^* \in \mathbb{C} \setminus \mathbb{R}$, Fig.~\ref{fig:phase_complex}):}
          When the worst-case eigenvalue is a complex conjugate pair, $\Phi_{\Delta^*}$ acts on a 2D invariant plane, and the state trajectory spirals. The perturbation changes the rotational frequency and minimizes radial contraction, creating a slowly decaying spiral.
\end{itemize}

In classical matrix perturbation theory, asymptotic variations that maximize the spectral radius correspond to rank-one (for real eigenvalues) or rank-two (for complex conjugate pairs) perturbations \cite{trefethenSpectraPseudospectraBehavior2005}. The PEP solutions recover this geometry. The primal formulation collapses finite-horizon trajectories onto the slow manifolds dictated by worst-case $\mu$-analysis.

\subsection{Pole-Placement bifurcation with respect to $N$}

The optimal adversarial LTI perturbation depends directly on the length of the evaluation horizon. We illustrate this using the system $A_1$ with the same $B$, $Q$ and $R$ but terminal cost $S = \mathrm{diag}(0, 1)$, which penalizes only the second state component. We set the uncertainty bounds $\varepsilon_A = 0.15$ and $\varepsilon_B = 0.05$. The different worst-case pole placements under those settings are shown in Fig.~\ref{fig:pole_migration}.

As the horizon $N$ increases, the initial transient growth contributes less to the total cumulative cost $J_N$. The slow asymptotic decay of the state begins to drive the cost instead. As $N$ approaches $15$, the perturbation stops manipulating transient eigenvectors. The poles migrate toward the rightmost edge of the structured $\varepsilon$-pseudospectrum to maximize the asymptotic time constant. Along the way, the poles undergo a geometric bifurcation. For horizons $N \leq 5$, the optimal perturbation maintains a complex conjugate pole pair. For $N \geq 6$, the poles meet on the real axis and split. This implies that the worst-case trajectories transition from 2D spiraling behavior (Fig.~\ref{fig:phase_complex}) to a 1D slow manifold one (Fig.~\ref{fig:phase_mon}).

Evaluating controllers over finite horizons therefore requires more than identifying the worst-case asymptotic poles on the pseudospectrum boundary.

\section{Performance: Classical vs. PEP}
\label{sec:iqc_comparison}

We now compare the worst-case performance obtained from the different approaches discussed in this paper: standard IQC \cite{megretskiSystemAnalysisIntegral1997}, the relaxed PEP SDP, the worst-case LTI perturbation extracted by projected gradient ascent, and the pseudospectral heuristic defined in Section~\ref{disc:pseudospectra}. We evaluate these methods on system $A_1$ to solve problem \eqref{eq:exact_worst_case}, using the same matrix $B$ as in the previous section. The cost matrices are set to $Q=R=S=\mathrm{Id}$, with perturbation bounds $\varepsilon_A = 0.15$ and $\varepsilon_B = 0.05$. Fig.~\ref{fig:iqc_comparison} shows how the accumulated cost changes with the horizon $N$ under each approach.

\begin{figure}[htbp]
    \centering
    \includegraphics[width=\linewidth]{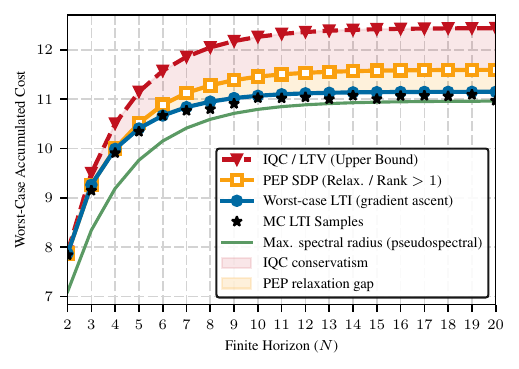}
    \caption{Finite-horizon accumulated cost bounds for system $A_1$ with $\varepsilon_A = 0.15$ and $\varepsilon_B = 0.05$ for the different approaches.}
    \label{fig:iqc_comparison}
\end{figure}

The IQC framework (red dashed line) bounds the cost over all allowable parameter variations, including LTV sequences \cite{megretskiSystemAnalysisIntegral1997}. Because the physical perturbation in \eqref{eq:exact_worst_case} is LTI, the relaxed PEP formulation (orange squares) tightens this bound by enforcing time invariance. The pink shaded area shows the cost difference between the LTV and LTI bounds.

As discussed at the end of Section~\ref{subsec:gram_sdp}, dropping the rank constraint in the PEP SDP introduces conservatism, as it admits infeasible trajectories; this gap is shown by the orange shaded region. This gap initially grows as $N$ increases then plateaus similarly to the IQC vs Relaxed SDP one. However, the LTI worst case extracted by projected gradient ascent (blue circles) is admissible, so its cost is a lower bound on $J_N^*$. It is first-order stationary but not certified globally optimal. Its cost is higher than what random Monte Carlo sampling (black stars) typically finds, as random sampling struggles to hit the narrow geometric manifold where the worst-case parameters lie.

The pseudospectral heuristic (green solid line) yields the lowest cost of the group. As discussed in Section~\ref{disc:pseudospectra}, maximizing the spectral radius places the dominant eigenvalue on the boundary of the pseudospectrum and thus minimizes the long-term decay rate. However, it misses the short-term transient growth. The heuristic does not align the eigenvectors with a worst-case initial condition $x_0$, leaving out a large portion of the finite-horizon cost for short horizons.

\section{Conclusions and Perspectives}
\label{sec:conclusion}

Extracting worst-case trajectories with the PEP framework shows how finite-horizon robustness differs from infinite-horizon stability. We found that LTI perturbations align with specific initial conditions to cause short-term transient growth. For example, we observed that, as the planning horizon $N$ changes, the failure mode of a system can transition from 2D spiraling invariant subspaces to 1D monotonic slow manifolds. Because of this transient behavior, analyzing the boundary eigenvalues of the structured pseudospectrum does not capture the full finite-time cost.

Comparing these PEP solutions to the standard IQC framework highlights an open question regarding LTV relaxations. While these relaxations usually introduce conservatism when bounding LTI uncertainties, we observed numerical instances where the IQC upper bounds exactly or closely match the LTI cost from our PEP approach. Finding the algebraic conditions that explain when a worst-case LTV sequence inherently collapses to an LTI structure will help determine when the IQC (LTV) relaxation is tight.

Finally, sequence-based optimization offers a direct path for extending this analysis. Operator interpolation relies on algebraic inequalities rather than specific physical models. This makes it possible to replace the norm-bounded LTI constraints with data-driven uncertainty sets or sector-bounded nonlinearities. Future work could focus on integrating these finite-horizon certificates into the controller design process to synthesize control laws optimized against worst-case transient behavior.

\bibliography{ECC}
\bibliographystyle{IEEEtran}

\addtolength{\textheight}{-12cm}                                     % on the last page of the document manually. It shortens
% the textheight of the last page by a suitable amount.
% This command does not take effect until the next page
% so it should come on the page before the last. Make
% sure that you do not shorten the textheight too much.

%%%%%%%%%%%%%%%%%%%%%%%%%%%%%%%%%%%%%%%%%%%%%%%%%%%%%%%%%%%%%%%%%%%%%%%%%%%%%%%%

%%%%%%%%%%%%%%%%%%%%%%%%%%%%%%%%%%%%%%%%%%%%%%%%%%%%%%%%%%%%%%%%%%%%%%%%%%%%%%%%

%%%%%%%%%%%%%%%%%%%%%%%%%%%%%%%%%%%%%%%%%%%%%%%%%%%%%%%%%%%%%%%%%%%%%%%%%%%%%%%%
% \section*{APPENDIX}

%%%%%%%%%%%%%%%%%%%%%%%%%%%%%%%%%%%%%%%%%%%%%%%%%%%%%%%%%%%%%%%%%%%%%%%%%%%%%%%%

\end{document}